\newtheorem{Thm}{Theorem}[section]
\newtheorem{Prop}[Thm]{Proposition}
\newtheorem{Cor}[Thm]{Corollary}
\newtheorem{Lem}[Thm]{Lemma}
\numberwithin{equation}{section}
\begin{document} 

\title[$p$-harmonic boundary and $D_p$-massive subsets]{The $p$-harmonic boundary and $D_p$-massive subsets of a graph of bounded degree}

\author[M. J. Puls]{Michael J. Puls}
\address{Department of Mathematics \\
John Jay College-CUNY \\
524 West 59th Street \\
New York, NY 10019 \\
USA}
\email{mpuls@jjay.cuny.edu}
\thanks{The research for this paper was partially supported by PSC-CUNY grant 63873-00 41}

\begin{abstract}
Let $p$ be a real number greater than one and let $\Gamma$ be a graph of bounded degree. We investigate links between the $p$-harmonic boundary of $\Gamma$ and the $D_p$-massive subsets of $\Gamma$. In particular, if there are $n$ pairwise disjoint $D_p$-massive subsets of $\Gamma$, then the $p$-harmonic boundary of $\Gamma$ consists of at least $n$ elements. We also show that the converse of this statement is also true.
\end{abstract}

\keywords{$p$-harmonic boundary, $D_p$-massive set, $p$-harmonic function, asymptotically constant functions, extreme points of a path}
\subjclass[2010]{Primary: 31C20; Secondary: 05C38, 31C45, 60J50}

\date{March 31, 2013}
\maketitle

\section{Introduction}\label{Introduction}
Throughout this paper $p$ will always denote a real number greater than one. A graph is said to have the {\em $p$-Liouville property} if every bounded $p$-harmonic function on the graph is constant. Similarly, a graph is said to have the {\em $D_p$-Liouville property} if every bounded $p$-harmonic function of the graph with finite $p$-Dirichlet sum is constant. When a graph has the $p$-Liouville property ($D_p$-Liouville property), the set of bounded $p$-harmonic functions (with finite $p$-Dirichlet sum) can be identified with $\mathbb{R}$, the real numbers. Now let $G$ be a finitely generated group. Our main motivation for studying the $p$-harmonic boundary of a graph arose from the problem of determining the first reduced $\ell^p$-cohomology space of $G$. A locally finite graph with bounded degree, called the Cayley graph of $G$, can be associated with $G$. Thus it makes sense to define the $p$-harmonic boundary for $G$, and to say that $G$ has the $p$-Liouville property ($D_p$-Liouville property). It turns out that the first reduced $\ell^p$-cohomology space of $G$ vanishes if and only if $G$ has the $D_p$-Liouville property if and only if the $p$-harmonic boundary of $G$ consists of one point or is empty. A more complete discussion about this characterization can be found in \cite{Puls10} and the references therein. Another reason for studying locally finite graphs with bounded degree is there intimate connection via discrete approximation to complete Riemannian manifolds with bounded geometry. The papers \cite{HoloSoar, kanaicapp, Kanai} contain a wealth of information concerning this link between graphs and manifolds.

Recently, a generalized version of the $D_p$-Liouville property for graphs has been studied in \cite{KimLee05, KimLee07}. More precisely, under what conditions on a graph can the bounded $p$-harmonic functions with finite $p$-Dirichlet sum be identified with $\mathbb{R}^n, n \in \mathbb{N}$. When $n \geq 2$, this also means that there are nonconstant $p$-harmonic functions on the graph. Holopainen and Soardi proved in \cite[Lemma 5.7]{HoloSoar} that there is a nonconstant bounded $p$-harmonic function with finite $p$-Dirichlet sum on a graph of bounded degree if and only if there exists two disjoint $D_p$-massive subsets of vertices of the graph. 

The purpose of this paper is to bring into sharper focus this connection between $D_p$-massive subsets and nonconstant $p$-harmonic functions on a graph. As a consequence, we are able to determine exactly when the set of bounded $p$-harmonic functions on a graph with finite $p$-Dirichlet sum can be identified with $\mathbb{R}^n$. The main tool we use to obtain our results is the $p$-harmonic boundary of a graph. 

The $p$-harmonic boundary is a subset of the $p$-Royden boundary. When $p = 2$ these sets are respectively known as the harmonic boundary and the Royden boundary. In \cite[Chapter 6]{Soardibook} the Royden and harmonic boundaries were studied for locally finite graphs of bounded degree. Many of the results in \cite[Chapter 6]{Soardibook} were translated from corresponding results on complete Riemannian surfaces. See \cite[Chapter 3]{SarioNakai70} for information about the Royden and harmonic boundaries in the setting of complete Riemannian surfaces. However, there are some major differences between these two cases. In \cite[Example 6.27]{Soardibook} it was shown that the Royden boundary and the harmonic boundary coincide for a locally finite graph of bounded degree that satisfies a strong isoperimetric inequality. This is in stark contrast with the complete Riemannian surface case. More precisely, if the harmonic boundary is removed from the Royden boundary of a complete Riemannian surface, then the resulting set is dense in the Royden boundary! See \cite[page 157]{SarioNakai70} for the details of this fact. Furthermore, if the graph is a $k$-regular tree, $k \geq 3$, then there are no isolated points in the harmonic boundary of the tree, \cite[ page 145]{Soardibook}.

The problem of explicitly computing the $p$-harmonic boundary of a locally finite graph of bounded degree appears to be quite difficult. The only result we can find in this direction is in the paper \cite{Wysoczanski} where it is shown that the Royden boundary of a 2-regular tree, which can be considered as a Cayley graph for the integers, is a quotient space of $\beta\mathbb{N}$, the Stone-\v{C}ech compactification of $\mathbb{N}$. In \cite[Chapter 7]{Puls10} the author gave some examples of finitely generated groups whose $p$-harmonic boundary is empty or contains exactly one point by using the fact that the first reduced $\ell^p$-cohomology of those particular groups is zero.

In Section \ref{preliminaries} we define the main concepts used in this paper. We also state our main result. Section \ref{proofmainresult} is devoted to the proof of the main result. We explain in Section \ref{elaboration} how our result extends the main result of \cite{KimLee07}.

I would like to thank the referee for several excellent suggestions that improved the exposition of this paper.

\section{Definitions and statement of main result}\label{preliminaries}
Let $\Gamma$ be a graph with vertex set $V_{\Gamma}$ and edge set $E_{\Gamma}$. We will write $V$ for $V_{\Gamma}$ and $E$ for $E_{\Gamma}$. For $x \in V, N_x$ will be the set of neighbors of $x$ and $deg(x)$ will denote the number of neighbors of $x$. We shall say that $\Gamma$ is of {\em bounded degree} if there exists a positive integer $k$ for which $deg(x) \leq k$ for every $x \in V$. A path $\gamma$ in $\Gamma$ is a sequence of vertices $x_1,x_2,\dots,x_n,\dots$ where $x_{i+1} \in N_{x_i}$ for $1 \leq i \leq n-1$ and $x_i \neq x_j$ if $i \neq j$. Note that all paths considered in this paper have no self-intersections. A graph is {\em connected} if any two distinct vertices of the graph are joined by a path. All graphs considered in this paper will be connected, of bounded degree with no self loops and have countably infinite number of vertices. By assigning length one to each edge of $\Gamma$, $V$ becomes a metric space with respect to the shortest path metric. We will denote this metric by $d(x,y)$, where $x,y \in V$. Thus $d(x,y)$ gives the length of the shortest path joining the vertices $x$ and $y$. For $S \subseteq V$, the outer boundary $\partial S$ of $S$ is the set of vertices in $V\setminus S$ with at least one neighbor in $S$, and $\vert S \vert$ will denote the cardinality of $S$. We use $1_V$ to represent the function that takes the value 1 on all elements of $V$. Finally, if $x \in V$ and $n \in \mathbb{N}$, the natural numbers, then $B_n(x)$ will denote the metric ball that contains all elements of $V$ that have distance less than $n$ from $x$.

We now proceed to define some function spaces that will be used in this paper. Let $S \subseteq V$ and let $f$ be a real-valued function on $S \cup \partial S$. We define the $p$-th power of the {\em gradient}, the {\em $p$-Dirichlet sum}, and the {\em $p$-Laplacian} of $x \in S$ by
\begin{equation*}
\begin{split}
 \vert Df (x) \vert^p  & = \sum_{y \in N_x} \vert f(y) - f(x) \vert^p,   \\
 I_p (f, S)            & = \sum_{x \in S} \vert Df (x) \vert^p, \\
  \Delta_p f (x)    & = \sum_{y \in N_x} \vert f(y) - f(x) \vert^{p-2} (f(y) - f(x)).
\end{split}
\end{equation*} 
In the case $1 < p < 2$, we make the convention that $\vert f(y) - f(x) \vert^{p-2} (f(y) - f(x)) = 0$ if $f(y) = f(x)$. A function $f$ is said to be {\em $p$-harmonic} on $S$ if $\Delta_p f(x) = 0$ for all $x \in S$. Observe that a function which is $p$-harmonic on $S$ is also defined on $\partial S$. We now give an alternate definition that is commonly used for a function to be $p$-harmonic on $S$ when $S$ is a finite (compact) subset of $V$. We begin by setting
\[ \Xi (f,S) = \frac{1}{2} \left( I_p(f,S) + \sum_{x \in \partial S} \sum_{y \in N_x \cap S} \vert f(x) - f(y) \vert^p\right). \]
A function $f$ is said to be $p$-harmonic on $S$ if it is the minimizer of $\Xi$ among the functions in $S \cup \partial S$ with the same value in $\partial S$ as $f$, that is , if 
\[ \Xi (f,S) \leq \Xi (u,S) \]
for every function $u$ in $S \cup \partial S$ with $f = u$ in $\partial S$. The interested reader can find more information about $p$-harmonic functions and harmonic functions on graphs in the papers \cite{Canton01, HoloSoar, HolopainenSoardi97, Kaufman03, KimLee05, KimLee07, Puls10, Shanmugalingam03, Soardibook, Yamasaki1977} and the references therein.

We shall say that $f$ is {\em $p$-Dirichlet finite} if $I_p(f,V) < \infty$. The set of all $p$-Dirichlet finite functions on $\Gamma$ will be denoted by $D_p(\Gamma)$. With respect to the following norm $D_p(\Gamma)$ is a reflexive Banach space,
$$ \Vert f \Vert_{D_p} = \left( I_p(f,V) + \vert f(o) \vert^p \right)^{1/p},$$
where $o$ is a fixed vertex of $\Gamma$ and $f \in D_p(\Gamma)$. We use $HD_p(\Gamma)$ to represent the set of $p$-harmonic functions on $V$ that are contained in $D_p(\Gamma)$. Note that the constant functions are members of $HD_p(\Gamma)$. Let $\ell^{\infty}(\Gamma)$ denote the set of bounded functions on $V$ and let $\| f \|_{\infty} = \sup_V \vert f \vert$ for $f \in \ell^{\infty}(\Gamma)$. Set $BD_p(\Gamma) = D_p(\Gamma) \cap \ell^{\infty}(\Gamma)$. The set $BD_p(\Gamma)$ is a Banach space under the norm 
$$\Vert f \Vert_{BD_p} = \left( I_p(f, V)\right)^{1/p} + \Vert f \Vert_{\infty},$$
where $f \in BD_p(\Gamma)$. Let $BHD_p(\Gamma)$ be the set of bounded $p$-harmonic functions contained in $D_p(\Gamma)$. The space $BD_p(\Gamma)$ is closed under the usual operations of scalar multiplication, addition and pointwise multiplication. Furthermore, for $f, g \in BD_p(\Gamma)$ we have that $\Vert fg \Vert_{BD_p} \leq \Vert f \Vert_{BD_p} \Vert g \Vert_{BD_p}$. Thus $BD_p(\Gamma)$ is a commutative Banach algebra. Let $C_c(\Gamma)$ be the set of functions on $V$ with finite support. Indicate the closure of $C_c(\Gamma)$ in $D_p(\Gamma)$ by $\overline{C_c(\Gamma)}_{D_p}$. Set $B(\overline{C_c(\Gamma)}_{D_p}) = \overline{C_c(\Gamma)}_{D_p} \cap \ell^{\infty}(\Gamma)$. Using the fact that the inequality $(a+b)^{1/p} \leq a^{1/p} + b^{1/p}$ is true when $a,b \geq 0$ and $1 < p \in \mathbb{R}$, we see immediately that $\Vert f \Vert_{D_p} \leq \Vert f \Vert_{BD_p}$. Consequently, $B(\overline{C_c(\Gamma)}_{D_p})$ is closed in $BD_p(\Gamma)$. 

\subsection{The $p$-harmonic boundary}\label{defnpharmbound}
In this subsection we construct the $p$-harmonic boundary of a graph $\Gamma$. For a more detailed discussion about this construction see Section 2.1 of \cite{Puls10}. Let $Sp(BD_p(\Gamma))$ denote the set of complex-valued characters on $BD_p(\Gamma)$, that is the nonzero ring homomorphisms from $BD_p(\Gamma)$ to $\mathbb{C}$. We will implicitly use the following property of elements in $Sp(BD_p(\Gamma))$ throughout the paper.
\begin{Lem} \label{realvalued} Let $\chi \in Sp(BD_p(\Gamma))$. If $f \in BD_p(\Gamma)$, then $\chi(f)$ is a real number.
\end{Lem}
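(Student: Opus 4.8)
The plan is to run a Gelfand-type argument adapted to the real Banach algebra $BD_p(\Gamma)$, whose crucial ingredient is that an element of $BD_p(\Gamma)$ which is bounded away from zero is invertible inside $BD_p(\Gamma)$. First I would record the elementary facts about $\chi$ that come from it being a nonzero ring homomorphism. Since $1_V$ is idempotent we have $\chi(1_V) = \chi(1_V)^2$, so $\chi(1_V) \in \{0,1\}$; and $\chi(1_V) = 0$ would force $\chi(f) = \chi(f \cdot 1_V) = \chi(f)\chi(1_V) = 0$ for every $f$, contradicting $\chi \neq 0$. Hence $\chi(1_V) = 1$. Moreover, since a character on a Banach algebra is continuous and additive, it is $\mathbb{R}$-linear, so $\chi(\lambda 1_V) = \lambda$ for every real $\lambda$; and if $g$ is invertible in $BD_p(\Gamma)$ then $\chi(g)\chi(g^{-1}) = \chi(1_V) = 1$, so $\chi(g) \neq 0$.

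The key lemma I would isolate is the following: if $h \in BD_p(\Gamma)$ satisfies $h \geq c$ pointwise for some constant $c > 0$, then $1/h \in BD_p(\Gamma)$. Boundedness of $1/h$ by $1/c$ is immediate, and for the Dirichlet sum one uses the pointwise estimate $|1/h(y) - 1/h(x)| = |h(x) - h(y)|/\bigl(h(x)h(y)\bigr) \leq c^{-2}\,|h(y) - h(x)|$, which upon raising to the $p$-th power and summing yields $I_p(1/h, V) \leq c^{-2p}\,I_p(h,V) < \infty$. Thus $1/h \in D_p(\Gamma) \cap \ell^{\infty}(\Gamma) = BD_p(\Gamma)$, and $h$ is invertible there with inverse $1/h$.

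With these facts in hand the conclusion follows quickly. Write $\chi(f) = \alpha + i\beta$ with $\alpha, \beta \in \mathbb{R}$ and suppose, for contradiction, that $\beta \neq 0$. Put $g = f - \alpha 1_V \in BD_p(\Gamma)$, so that $\chi(g) = \chi(f) - \alpha = i\beta$, and consider the real-valued function $h = g^2 + \beta^2 1_V$, which lies in $BD_p(\Gamma)$ because $BD_p(\Gamma)$ is closed under multiplication and addition. Since $h(x) = g(x)^2 + \beta^2 \geq \beta^2 > 0$ for all $x \in V$, the key lemma shows that $h$ is invertible in $BD_p(\Gamma)$, whence $\chi(h) \neq 0$. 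On the other hand, using multiplicativity, additivity, and $\chi(\beta^2 1_V) = \beta^2$ we compute $\chi(h) = \chi(g)^2 + \beta^2 = (i\beta)^2 + \beta^2 = 0$, a contradiction. Therefore $\beta = 0$ and $\chi(f)$ is real.

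The invertibility lemma is routine once the gradient estimate above is written out, so the step requiring the most care is justifying that $\chi$ fixes real scalars, that is, $\chi(\lambda 1_V) = \lambda$ for $\lambda \in \mathbb{R}$; this is where one must invoke the automatic continuity of characters on $BD_p(\Gamma)$ (and hence their $\mathbb{R}$-linearity), since a merely algebraic ring homomorphism could in principle act wildly on the real scalars. Once that point is secured, the algebraic identity $\chi(h) = \chi(g)^2 + \beta^2$ combined with the invertibility of $h$ forces $\beta = 0$ and delivers the statement.
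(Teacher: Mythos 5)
Your proposal is correct and follows essentially the same route as the paper: the paper sets $F=(f-a)/b$ so that $\chi(F)=i$ and shows $F^2+1_V\geq 1$ is invertible in $BD_p(\Gamma)$ via the same gradient estimate you use, deriving the same contradiction with $\chi(F^2+1_V)=0$. Your version merely avoids the normalization by working with $g^2+\beta^2 1_V$ instead, and your remark about $\mathbb{R}$-linearity of $\chi$ is a fair point that the paper's proof also uses implicitly.
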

\begin{proof}
Suppose there exists an $f \in BD_p(\Gamma)$ for which $\chi (f) = a + bi$, where $b \neq 0$. Set $F = (f-a)/b$ and observe that $\chi(F) = i$. Since $BD_p(\Gamma)$ is a Banach algebra, $F, F^2$ and $F^2 + 1_V$ all belong to $BD_p(\Gamma)$. Also, $\chi(F^2 + 1_V) = 0$. For $x \in V$ and $y \in N_x$,
\[ \big\vert \frac{1}{F^2(y) + 1_V} - \frac{1}{F^2(x) + 1_V} \big\vert^p \leq \vert F^2(x) - F^2(y)\vert^p, \]
because $F^2 + 1_V \geq 1$ on $V$. It now follows that $(F^2 + 1_V)^{-1} \in BD_p(\Gamma)$, and so $F^2 + 1_V$ has a multiplicative inverse in $BD_p(\Gamma)$. Hence, $\chi(F^2 + 1_V) \neq 0$, a contradiction. Therefore, $\chi(f)$ is a real number.
\end{proof}

With respect to the weak $\ast$-topology, $Sp(BD_p(\Gamma))$ is a compact Hausdorff space. If $A \subseteq Sp(BD_p(\Gamma)), \overline{A}$ will indicate the closure of $A$ in $Sp(BD_p(\Gamma))$. Given a topological space $X$, let $C(X)$ denote the ring of continuous functions on $X$ endowed with the sup-norm. The Gelfand transform defined by $\hat{f}(\chi) = \chi(f)$ yields a monomorphism of Banach algebras from $BD_p(\Gamma)$ into $C(Sp(BD_p(\Gamma)))$ with dense image. Furthermore, the map $i \colon V \rightarrow Sp(BD_p(\Gamma))$ given by $(i(x))(f) = f(x)$ is an injection, and $i(V)$ is an open dense subset of $Sp(BD_p(\Gamma))$. For the rest of this paper we shall write $f$ for $\hat{f}$, where $f \in BD_p(\Gamma)$. The {\em $p$-Royden boundary} of $\Gamma$, which we shall denote by $R_p(\Gamma)$, is the compact set $Sp(BD_p(\Gamma))\setminus i(V)$. The {\em $p$-harmonic boundary} of $\Gamma$ is the following subset of $R_p(\Gamma)$:
\[  \partial_p(\Gamma) \colon = \{ \chi \in R_p(\Gamma) \mid \hat{f}(\chi) = 0 \mbox{ for all }f \in B(\overline{C_c(\Gamma)}_{D_p}) \}. \]
We shall write $\vert \partial_p (\Gamma) \vert$ to indicate the cardinality of $\partial_p(\Gamma)$.

\subsection{$D_p$-massive sets}\label{Dpmassive}
We now define the concept of a $D_p$-massive subset of a graph. An infinite connected subset $U$ of $V$ with $\partial U \neq \emptyset$ is called a $D_p$-massive subset of $V$ if there exists a nonnegative function $u \in BD_p(\Gamma)$ with the following properties:
\begin{enumerate} 
    \item $\Delta_p u(x) =0$ for $x\in U$,
  \item $u(x) = 0$ for $x \in \partial U$, and 
  \item $\sup_{x \in U} u(x) = 1$.
    \end{enumerate}
We call any $u$ that satisfies these conditions an {\em inner potential} of the $D_p$-massive subset $U$. The next result is Proposition 4.11 of \cite{Puls10} and will be needed in the sequel.
\begin{Prop} \label{Dpmassiveclosure}
If $U$ is a $D_p$-massive subset of $V$, then $\overline{i(U)}$ contains at least one point of $\partial_p(\Gamma)$.
\end{Prop}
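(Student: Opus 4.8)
The plan is to reduce the statement to an energy identity. Throughout let $u$ be an inner potential of the $D_p$-massive set $U$. First I would normalize $u$ by replacing it with the function $w$ defined by $w = u$ on $U \cup \partial U$ and $w = 0$ on $V \setminus U$. Since every neighbor of a vertex of $U$ lies in $U \cup \partial U$, and $u = 0$ on $\partial U$, an edge-by-edge comparison gives $\vert Dw(x)\vert \leq \vert Du(x)\vert$ for every $x \in V$, so $I_p(w,V) \leq I_p(u,V) < \infty$ and $w \in BD_p(\Gamma)$; moreover $0 \leq w \leq 1$, $\sup_V w = 1$, and $w = 0$ on $\partial U$. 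A direct computation of $\Delta_p w(x)$ at vertices of $U$ (where it equals $\Delta_p u(x)=0$), at vertices of $\partial U$ (where $w(x)=0$ and the sum is $\sum_{y \in N_x} w(y)^{p-1} \geq 0$), and at vertices of $V \setminus (U \cup \partial U)$ (where all relevant values vanish) shows that $\Delta_p w(x) \geq 0$ for all $x \in V$. Thus $w$ is a nonnegative, bounded, $p$-subharmonic function of finite energy; it is nonconstant because $w = 0$ on $\partial U \neq \emptyset$ while $\sup_V w = 1$, and hence $I_p(w,V) > 0$.

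Next I would argue by contradiction, assuming $\overline{i(U)} \cap \partial_p(\Gamma) = \emptyset$. Since $i(V)$ is dense, $R_p(\Gamma) \subseteq \overline{i(U)} \cup \overline{i(V \setminus U)}$, so every $\chi \in \partial_p(\Gamma)$ lies in $\overline{i(V \setminus U)}$. Choosing a net in $V \setminus U$ converging to $\chi$ and using the continuity of $\hat{w}$ together with $w \equiv 0$ on $V \setminus U$, I obtain $\hat{w}(\chi) = 0$. Hence $\hat{w}$ vanishes on all of $\partial_p(\Gamma)$.

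At this point I would invoke the characterization, established in \cite{Puls10}, that a function of $BD_p(\Gamma)$ belongs to $B(\overline{C_c(\Gamma)}_{D_p})$ precisely when its Gelfand transform vanishes on $\partial_p(\Gamma)$; it yields $w \in B(\overline{C_c(\Gamma)}_{D_p})$. I then pick $\varphi_n \in C_c(\Gamma)$ with $\varphi_n \to w$ in $D_p(\Gamma)$ and truncate so that $0 \leq \varphi_n \leq 1$ (truncation into the range of $w$ preserves finite support and does not increase the $D_p$-distance). Summation by parts gives
\[ \sum_{x \in V} \varphi_n(x)\, \Delta_p w(x) = \tfrac{1}{2} \sum_{x \in V} \sum_{y \in N_x} \big( \varphi_n(x) - \varphi_n(y) \big)\, \vert w(y) - w(x) \vert^{p-2} \big( w(y) - w(x) \big). \]
Letting $n \to \infty$ and using continuity of the $p$-Dirichlet pairing, the right-hand side tends to $-\tfrac{1}{2} I_p(w,V)$. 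The left-hand side is a finite sum of nonnegative terms, since $\varphi_n \geq 0$ and $\Delta_p w \geq 0$, hence is $\geq 0$. Therefore $I_p(w,V) \leq 0$, contradicting $I_p(w,V) > 0$. This contradiction forces $\overline{i(U)} \cap \partial_p(\Gamma) \neq \emptyset$, which is the assertion.

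The hard part is the characterization invoked in the third paragraph: the nontrivial implication that vanishing of $\hat{w}$ on $\partial_p(\Gamma)$ forces $w \in B(\overline{C_c(\Gamma)}_{D_p})$. This is exactly the content one cannot bypass, because a character of $BD_p(\Gamma)$ is continuous only for the $\Vert \cdot \Vert_{BD_p}$-norm, while membership in $B(\overline{C_c(\Gamma)}_{D_p})$ involves only $D_p$-approximation; since $D_p$-approximation by finitely supported functions cannot in general be upgraded to sup-norm approximation, testing $\chi$ merely against $C_c(\Gamma)$ does not suffice, and the full Royden-type characterization from \cite{Puls10} is needed. The remaining points to verify carefully are the boundedness and continuity of the $p$-Dirichlet pairing that justify passing to the limit in the displayed identity, and the legitimacy of the nonnegative truncated approximants $\varphi_n$.
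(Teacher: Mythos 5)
The paper does not actually prove this proposition: it quotes it as Proposition 4.11 of \cite{Puls10}, so there is no in-paper argument to compare against. Your reconstruction is essentially sound and follows the natural route: extend the inner potential by zero to obtain a nonconstant, nonnegative, bounded, finite-energy $p$-subharmonic function $w$ vanishing on $V\setminus U$; observe that if $\overline{i(U)}$ missed $\partial_p(\Gamma)$ then every harmonic boundary point would lie in $\overline{i(V\setminus U)}$ and hence $\hat{w}$ would vanish on all of $\partial_p(\Gamma)$; invoke the Royden-type characterization to place $w$ in $B(\overline{C_c(\Gamma)}_{D_p})$; and then kill $I_p(w,V)$ by pairing $\Delta_p w\ge 0$ against nonnegative finitely supported approximants. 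The case analysis giving $\Delta_p w\ge 0$, the edgewise bound $I_p(w,V)\le I_p(u,V)$, the H\"older argument identifying the limit of the right-hand side as $-\tfrac12 I_p(w,V)$ (using that $\vert w(y)-w(x)\vert^{p-1}$ lies in the dual exponent class over edges), and the continuity argument for $\hat{w}=0$ on $\partial_p(\Gamma)$ are all correct. You are also right to single out the implication ``$\hat{f}=0$ on $\partial_p(\Gamma)$ implies $f\in B(\overline{C_c(\Gamma)}_{D_p})$'' as the load-bearing external input; it is proved in \cite{Puls10} independently of the present proposition, so there is no circularity.

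The one genuine flaw is the parenthetical justification of the truncation step: it is not true that truncating into $[0,1]$ ``does not increase the $D_p$-distance.'' The truncation $T(t)=\max(0,\min(t,1))$ satisfies $\vert T(s)-T(t)\vert\le\vert s-t\vert$, which yields $I_p(T\circ\varphi_n,V)\le I_p(\varphi_n,V)$, but it does not yield $I_p(T\circ\varphi_n-w,V)\le I_p(\varphi_n-w,V)$; for instance adjacent values $\varphi(x)=1.5$, $w(x)=1$, $\varphi(y)=0.5$, $w(y)=0$ make the truncated increment of the difference strictly larger than the untruncated one. The fact you need is nevertheless true and standard: $I_p(T\circ\varphi_n,V)$ is bounded and $T\circ\varphi_n\to w$ pointwise (point evaluations are continuous on $D_p(\Gamma)$), so by reflexivity $T\circ\varphi_n$ converges weakly to $w$ in $D_p(\Gamma)$, and Mazur's lemma produces convex combinations, still finitely supported with values in $[0,1]$, converging to $w$ in norm. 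Note also that only $\varphi_n\ge 0$, not $\varphi_n\le 1$, is needed for the sign of the left-hand side. With that repair the argument is complete.
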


\subsection{Statement of the main result}\label{statementmainresult}
 The main result of this paper is:
\begin{Thm}\label{mainresult}
Let $1 < p \in \mathbb{R}$ and let $\Gamma$ be a graph of bounded degree. Suppose $n \in \mathbb{N}$. Then there exists $n$ pairwise disjoint $D_p$-massive subsets $D_1, D_2, \dots ,D_n$ of $V$ if and only if $\vert \partial_p(\Gamma) \vert \geq n$.
\end{Thm}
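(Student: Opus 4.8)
The plan is to prove the two implications separately, using the inner potentials of the massive sets as the device that converts disjointness of subsets of $V$ into separation of points of $\partial_p(\Gamma)$, and conversely.

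For the forward implication, suppose $D_1,\dots,D_n$ are pairwise disjoint $D_p$-massive subsets with inner potentials $u_1,\dots,u_n$. I first normalize each $u_i$ by setting it equal to $0$ on $V\setminus(D_i\cup\partial D_i)$. Since $u_i$ already vanishes on $\partial D_i$, and since every neighbor of a vertex of $D_i$ lies in $D_i\cup\partial D_i$, this alteration changes neither the $p$-harmonicity of $u_i$ on $D_i$ nor conditions (2)--(3), and it can only decrease $I_p(u_i,V)$; hence the normalized $u_i$ is still an inner potential lying in $BD_p(\Gamma)$, and now $u_i\equiv 0$ off $D_i$. By Proposition \ref{Dpmassiveclosure} the set $\overline{i(D_i)}$ meets $\partial_p(\Gamma)$; inspecting its proof, which selects such a point as a maximizer of the inner potential, I obtain $\chi_i\in\partial_p(\Gamma)\cap\overline{i(D_i)}$ with $\hat u_i(\chi_i)=1$. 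Because the $D_j$ are pairwise disjoint, $u_i$ vanishes on $D_j$ for $j\ne i$, and as $\chi_j\in\overline{i(D_j)}$ the continuity of $\hat u_i$ forces $\hat u_i(\chi_j)=0$. Thus $\hat u_i$ separates $\chi_i$ from each $\chi_j$, the points $\chi_1,\dots,\chi_n$ are distinct, and $\vert\partial_p(\Gamma)\vert\ge n$.

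For the converse, suppose $\chi_1,\dots,\chi_n$ are distinct points of $\partial_p(\Gamma)$. I will use that $Sp(BD_p(\Gamma))$ is compact Hausdorff, that the Gelfand images of $BD_p(\Gamma)$ form a dense, point-separating subalgebra of $C(Sp(BD_p(\Gamma)))$, and that $BD_p(\Gamma)$ is closed under the lattice operations $\max$ and $\min$ and under truncations $f\mapsto\max(f-c,0)$. Combining these with a Urysohn argument on disjoint compact neighborhoods of the $\chi_i$, I construct $\phi_1,\dots,\phi_n\in BD_p(\Gamma)$ with $0\le\phi_i\le1$, $\hat\phi_i(\chi_j)=\delta_{ij}$, and pairwise disjoint supports in $Sp(BD_p(\Gamma))$. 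Multiplicativity of the Gelfand transform then gives $\widehat{\phi_i\phi_j}=\hat\phi_i\hat\phi_j=0$, so $\phi_i\phi_j\equiv 0$ on $V$, and therefore the super-level sets $U_i:=\{x\in V:\phi_i(x)>1/2\}$ are pairwise disjoint. Each $U_i$ is infinite (otherwise $\overline{i(U_i)}\subseteq i(V)$ would miss the boundary point $\chi_i$, yet $\hat\phi_i(\chi_i)=1$ forces $\chi_i\in\overline{i(U_i)}$) and has nonempty outer boundary; passing to the connected component of $U_i$ whose closure contains $\chi_i$, I may assume $U_i$ is infinite and connected with $\partial U_i\ne\emptyset$.

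It remains to show that each $U_i$ is $D_p$-massive, and this is the main obstacle. I build the inner potential $u_i$ as the equilibrium (capacitary) potential of $U_i$: let $u_i^{(k)}$ be the $p$-harmonic function on $U_i\cap B_k(o)$ equal to $0$ on $\partial U_i$ and to $1$ on $U_i\cap\partial B_k(o)$; these increase to a function $u_i$ with $0\le u_i\le1$ that is $p$-harmonic on $U_i$ and vanishes on $\partial U_i$. The two nontrivial conditions are $\sup_{U_i}u_i=1$ and $I_p(u_i,V)<\infty$. Here I employ $v_i:=\max(2\phi_i-1,0)\in BD_p(\Gamma)$, which satisfies $0\le v_i\le1$, vanishes on $V\setminus U_i$, and has $\hat v_i(\chi_i)=1$ (using that characters are order preserving, so the Gelfand transform commutes with $\max$); the last property places $\chi_i\in\overline{i(U_i)}\cap\partial_p(\Gamma)$ and yields $\sup_{U_i}v_i=1$. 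Using $v_i$ as a competitor for the condenser $p$-capacity of the pair $(U_i,\partial U_i)$, I show that this capacity is both finite and positive, which is exactly what forces $u_i$ to have finite $p$-Dirichlet sum and supremum equal to $1$. Carrying out this nonlinear potential-theoretic comparison, and in particular ruling out the degeneration $\sup_{U_i}u_i<1$, where the failure of linear superposition for $p\ne2$ blocks the naive argument, is the crux; the remaining checks that $u_i$ is nonnegative, bounded, and zero on $\partial U_i$ are immediate from the construction.
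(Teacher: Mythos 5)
Your proposal has genuine gaps in both directions, and in each case the missing step is exactly where the paper has to work hardest. In the forward direction you extract from Proposition \ref{Dpmassiveclosure} not just a point $\chi_i\in\overline{i(D_i)}\cap\partial_p(\Gamma)$ but one with $\hat u_i(\chi_i)=1$, justifying this by ``inspecting its proof, which selects such a point as a maximizer of the inner potential.'' The statement of Proposition \ref{Dpmassiveclosure} gives no such point, and you cannot lean on an unstated strengthening of a cited result: a priori $\hat u_i$ could vanish on all of $\overline{i(D_i)}\cap\partial_p(\Gamma)$ even though $\sup_{D_i}u_i=1$, since the supremum could be ``carried'' by Royden boundary points outside the harmonic boundary. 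The paper closes exactly this hole by solving the Dirichlet problem at $\partial_p(\Gamma)$ (Theorem 4.6/2.6 of \cite{Puls10}) to get $h_k\in BHD_p(\Gamma)$ with $h_k=u_k$ on $\partial_p(\Gamma)$, invoking the maximum principle, and then applying Lemma \ref{componentdpmassive} to a component of the superlevel set $\{x\in D_k\mid h_k(x)>1-\epsilon\}$; a harmonic boundary point in the closure of that component satisfies $\chi_k(h_k)\geq 1-\epsilon>0$, hence $\chi_k(u_k)\neq 0$. Your disjointness observation ($\hat u_i(\chi_j)=0$ for $j\neq i$ by continuity) is fine, but the nonvanishing half is unproved as written.

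In the converse direction the gap is larger and you flag it yourself: you reduce to showing that the pairwise disjoint superlevel sets $U_i=\{\phi_i>1/2\}$ are $D_p$-massive, propose to build the inner potential as an exhaustion limit of equilibrium potentials, and then state that ruling out $\sup_{U_i}u_i<1$ and establishing finiteness of the Dirichlet sum ``is the crux'' without carrying it out. Since your $\phi_i$ are merely elements of $BD_p(\Gamma)$ and not $p$-harmonic, Lemma \ref{componentdpmassive} does not apply to their superlevel sets, and it is not at all clear that an infinite connected set whose closure meets $\partial_p(\Gamma)$ must be $D_p$-massive; this is the converse of Proposition \ref{Dpmassiveclosure}, which is not asserted anywhere in the paper. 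The paper sidesteps the issue entirely: it replaces the Urysohn functions $f_k$ by elements $h_k\in BHD_p(\Gamma)$ agreeing with $f_k$ on $\partial_p(\Gamma)$, takes components $D_{k,\epsilon}$ of $\{h_k>1-\epsilon\}$, gets $D_p$-massiveness for free from Lemma \ref{componentdpmassive}, and then proves disjointness for some $\epsilon>0$ by a compactness argument: if $D_{k,\epsilon}\cap D_{j,\epsilon}\neq\emptyset$ for all $\epsilon$, one extracts harmonic boundary points $\psi_i$ with $\psi_i(h_k),\psi_i(h_j)>1-2^{-i}$ and a limit $\psi$ with $\psi(h_k)=\psi(h_j)=1$, contradicting the construction of the $f_k$. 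Note the order of difficulties is reversed relative to your plan: the paper gets massiveness easily and must work for disjointness, whereas you get disjointness easily and leave massiveness, which is the genuinely hard nonlinear step, unproved.
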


By combining this theorem with Corollary 2.7 of \cite{Puls10} we obtain
\begin{Cor} \label{cormainresult}
Let $1 < p \in \mathbb{R}, n \in \mathbb{N}$ and let $\Gamma$ be a graph of bounded degree. If there exists $n$ pairwise disjoint $D_p$-massive subsets of $V$, but there does not exist $n+1$ disjoint $D_p$-massive subsets of $V$, then $BHD_p(\Gamma)$ can be identified with $\mathbb{R}^n$.
\end{Cor}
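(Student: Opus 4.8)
My plan is to use the inner potentials of the massive sets on one side and the point‑separating functions furnished by the Gelfand theory of $BD_p(\Gamma)$ on the other, with the $p$-harmonic boundary serving as the dictionary between them. Both implications reduce to the single principle that a function in $BD_p(\Gamma)$ which is detectable at a point $\chi\in\partial_p(\Gamma)$ (that is, $\hat f(\chi)\neq 0$) cannot lie in $B(\overline{C_c(\Gamma)}_{D_p})$, while functions manufactured from a massive set see only that set.

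\emph{Forward implication.} Suppose $D_1,\dots,D_n$ are pairwise disjoint $D_p$-massive sets with inner potentials $u_1,\dots,u_n$. First I would normalize each $u_j$ to vanish off $D_j$: replacing $u_j$ by the function equal to $u_j$ on $D_j$ and to $0$ on $V\setminus D_j$ preserves conditions (1)--(3) in the definition (for $x\in D_j$ one has $N_x\subseteq D_j\cup\partial D_j$, where the two functions agree, so $\Delta_p u_j(x)=0$ is untouched) and does not increase $I_p(\cdot,V)$, so the truncation is again an inner potential. With this normalization $u_j\equiv 0$ on every $D_i$ with $i\neq j$, so for any point $\chi\in\overline{i(D_i)}$, taking a net in $i(D_i)$ converging to $\chi$ and using continuity of the Gelfand transform gives $\hat u_j(\chi)=0$. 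Next, by Proposition \ref{Dpmassiveclosure}—more precisely by the maximum principle for $BD_p(\Gamma)$-functions relative to $\partial_p(\Gamma)$ that underlies its proof, which places a point of the $p$-harmonic boundary where the inner potential attains its supremum—I would select $\chi_j\in\partial_p(\Gamma)\cap\overline{i(D_j)}$ with $\hat u_j(\chi_j)=1$. Then $\hat u_j(\chi_j)=1\neq 0=\hat u_j(\chi_i)$ for $i\neq j$, whence the $\chi_j$ are pairwise distinct and $\vert\partial_p(\Gamma)\vert\ge n$.

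\emph{Reverse implication.} Starting from distinct $\chi_1,\dots,\chi_n\in\partial_p(\Gamma)$, I would use that $Sp(BD_p(\Gamma))$ is compact Hausdorff, that the Gelfand image of $BD_p(\Gamma)$ is dense in $C(Sp(BD_p(\Gamma)))$, and that every character is real‑valued (Lemma \ref{realvalued}) to produce, by a partition‑of‑unity argument approximating disjointly supported Urysohn functions, elements $g_1,\dots,g_n\in BD_p(\Gamma)$ with $\hat g_j(\chi_j)$ close to $1$, $\hat g_j(\chi_i)$ close to $0$ for $i\neq j$, and with the open sets $\{\hat g_j>1/2\}$ pairwise disjoint. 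Setting $U_j=\{x\in V:g_j(x)>1/2\}=i^{-1}(\{\hat g_j>1/2\})$, the $U_j$ are pairwise disjoint, each is infinite, and $\chi_j\in\overline{i(U_j)}$ since $\chi_j$ lies in this neighborhood and in $R_p(\Gamma)=\overline{i(V)}\setminus i(V)$. To meet the connectedness requirement in the definition of a massive set I would follow a path in $\Gamma$ running out to $\chi_j$ while staying in $U_j$, and pass to the component $W_j$ of $U_j$ containing its tail; securing such a path, so that $W_j$ is connected and still satisfies $\chi_j\in\overline{i(W_j)}$, is where the analysis of paths and their extreme points enters, and is the first technical hurdle.

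Finally I would show each $W_j$ is $D_p$-massive. Its inner potential is the limit $u_j=\lim_m\omega_m$ (suitably normalized by its supremum) of the $p$-harmonic functions $\omega_m$ on $W_j\cap B_m(o)$ with boundary value $0$ on $\partial W_j$ and $1$ on $W_j\cap\partial B_m(o)$; the only thing to exclude is $u_j\equiv 0$, and this is the main obstacle. Here I would introduce the test function $\tilde h_j$ equal to $\max(2g_j-1,0)$ on $W_j$ and to $0$ elsewhere: it lies in $BD_p(\Gamma)$, is supported on $W_j$, vanishes on $\partial W_j$, and approaching $\chi_j$ through $W_j$ gives $\hat{\tilde h_j}(\chi_j)>0$. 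Arguing by contradiction, if $W_j$ were not massive then its capacity at infinity would vanish, and the standard Royden--Yamasaki mechanism shows that every bounded finite‑energy function supported on $W_j$ and vanishing on $\partial W_j$—in particular $\tilde h_j$—is a $D_p(\Gamma)$-limit of finitely supported functions, so $\tilde h_j\in B(\overline{C_c(\Gamma)}_{D_p})$. Since $\chi_j\in\partial_p(\Gamma)$, the definition of the $p$-harmonic boundary forces $\hat{\tilde h_j}(\chi_j)=0$, contradicting $\hat{\tilde h_j}(\chi_j)>0$. Hence each $W_j$ is $D_p$-massive, and the $W_j$ are pairwise disjoint, which completes the proof; the delicate point throughout is tying the \emph{massiveness} of $W_j$ (nonvanishing of the inner potential) to the single defining property of a point of $\partial_p(\Gamma)$.
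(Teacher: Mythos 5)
There is a genuine gap: you have proved (a version of) the wrong statement. Your argument is aimed at the equivalence of Theorem \ref{mainresult} --- that $n$ pairwise disjoint $D_p$-massive subsets exist if and only if $\vert\partial_p(\Gamma)\vert\ge n$ --- and it ends there. The corollary's conclusion, that $BHD_p(\Gamma)$ can be identified with $\mathbb{R}^n$, is never derived, and the hypothesis that there do \emph{not} exist $n+1$ pairwise disjoint $D_p$-massive subsets is never used. The paper's own proof is a two-line combination: Theorem \ref{mainresult} applied in both directions pins down $\vert\partial_p(\Gamma)\vert=n$ exactly (the nonexistence of $n+1$ disjoint massive sets rules out $\vert\partial_p(\Gamma)\vert\ge n+1$), and then Corollary 2.7 of \cite{Puls10} --- essentially the unique solvability of the Dirichlet problem at the $p$-harmonic boundary, which identifies $BHD_p(\Gamma)$ with the continuous functions on the finite set $\partial_p(\Gamma)$ --- yields $BHD_p(\Gamma)\cong\mathbb{R}^n$. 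That last identification is the substantive content of the corollary and is entirely absent from your write-up; without some form of it (injectivity and surjectivity of the restriction map $BHD_p(\Gamma)\to\mathbb{R}^{\partial_p(\Gamma)}$), knowing the cardinality of the boundary says nothing about the dimension of $BHD_p(\Gamma)$.

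Secondarily, even as a proof of Theorem \ref{mainresult} your sketch has soft spots. In the forward direction you invoke a maximum principle to place a boundary point where the inner potential $u_j$ ``attains its supremum,'' but $u_j$ is $p$-harmonic only on $D_j$, not on $V$, so the maximum principle of \cite[Theorem 4.7]{Puls10} does not apply to it directly; the paper circumvents this by first solving the Dirichlet problem for an $h_k\in BHD_p(\Gamma)$ with $h_k=u_k$ on $\partial_p(\Gamma)$ and then applying Lemma \ref{componentdpmassive} to $h_k$. In the reverse direction you take superlevel sets of non-harmonic $g_j\in BD_p(\Gamma)$ and assert that a non-massive component would have vanishing capacity at infinity; that amounts to claiming $p$-hyperbolic implies $D_p$-massive, which is not established here (the paper only notes the converse). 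The paper avoids both hurdles by working exclusively with the harmonic representatives $h_k$ and Lemma \ref{componentdpmassive}, whose conclusion is massiveness of every component of a superlevel set, followed by a compactness argument in $\partial_p(\Gamma)$ to secure disjointness for small $\epsilon$.
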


\section{Proof of Theorem \ref{mainresult}}{\label{proofmainresult}
The following lemma will be needed for the proof of Theorem \ref{mainresult}. For a proof of the lemma see the first part of the proof of \cite[Lemma 5.7]{HoloSoar}
\begin{Lem}\label{componentdpmassive}
Let $h$ be a nonconstant function in $BHD_p(\Gamma)$, and let $U$ be an infinite connected subset of $V$. Let $a$ and $b$ be real numbers such that
\[ \inf_{x \in U} h < a < b < \sup_{x \in U} h. \]
Then each component of the set $\{ x \in U \mid h(x) > b \}$ and each component of $\{ x \in U \mid h(x) < a \}$ is $D_p$-massive.
\end{Lem}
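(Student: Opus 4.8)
The plan is to prove the super-level statement and deduce the sub-level statement by symmetry. Since $-h \in BHD_p(\Gamma)$ (because $\Delta_p(-h) = -\Delta_p h = 0$ and $\|-h\|_{BD_p} = \|h\|_{BD_p}$) and $\{x \in U \mid h(x) < a\} = \{x \in U \mid -h(x) > -a\}$ with $\inf_U(-h) < -b < -a < \sup_U(-h)$, a component of $\{h < a\}$ is exactly a component of a super-level set of $-h$; so it suffices to show that a fixed component $W$ of $\{x \in U \mid h(x) > b\}$ is $D_p$-massive. The main device will be the truncated function $v := (h-b)^{+} = \max(h-b,0)$, which I expect to serve simultaneously as a subsolution certifying that $W$ is large and as the comparison function forcing the inner potential to have supremum $1$.

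First I would record the basic properties. We have $v \in BD_p(\Gamma)$: boundedness is clear, and $I_p(v,V) \le I_p(h,V) < \infty$ because $|v(x)-v(y)| \le |h(x)-h(y)|$ for all neighbours. On $W$ the function $v$ equals $h-b>0$, and a termwise comparison of $\Delta_p v(x)$ with $\Delta_p h(x)=0$, using the monotonicity of $t \mapsto |t|^{p-2}t$ (the problematic neighbours are those with $h(y)<b$, where $v(y)=0 > h(y)-b$, so each such term only increases), shows $\Delta_p v(x) \ge 0$ on $W$, i.e. $v$ is $p$-subharmonic on $W$. From this, infiniteness of $W$ follows: were $W$ finite, the maximum principle for $p$-subharmonic functions would force $\max_{W \cup \partial W} v$ to be attained on $\partial W$, where $v=0$, hence $v \equiv 0$ on $W$, contradicting $v = h-b>0$ there. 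Connectedness of $W$ is automatic, and $\partial W \ne \emptyset$ since $W \ne V$ (there are vertices of $U$ with $h < a < b$), so the basic requirements in the definition of a $D_p$-massive set are met.

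Next I would construct the inner potential by exhaustion. Put $W_n = W \cap B_n(o)$ and let $u_n$ be the minimizer of $\Xi(\,\cdot\,,W_n)$ subject to the boundary data $u_n = 0$ on $\partial W$ and $u_n = 1$ on the far boundary $\overline{W} \cap \partial B_n(o)$; existence, uniqueness, and the fact that $u_n$ is $p$-harmonic on $W_n$ are the standard finite-set facts recalled in the preliminaries. The comparison principle gives $0 \le u_n \le 1$ and monotonicity $u_n \le u_{n+1}$, so $u_n \uparrow u$ pointwise with $u$ $p$-harmonic on $W$, $0 \le u \le 1$, and $u = 0$ on $\partial W$; uniform energy bounds for the $u_n$ together with lower semicontinuity of $I_p$ show that $u$, extended by $0$ off $W$, lies in $BD_p(\Gamma)$. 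It then remains to verify the normalization $\sup_W u = 1$. Setting $c := \sup_W v = \sup_W h - b \in (0,\infty)$, the function $v/c$ is $p$-subharmonic on $W$ and satisfies $v/c \le u_n$ on $\partial W_n$ (on $\partial W$ both sides vanish, on the far boundary $v/c \le 1 = u_n$), so the comparison principle yields $v/c \le u_n \le u$ throughout $W$, whence $\sup_W u \ge \sup_W(v/c) = 1$. Combined with $u \le 1$ this gives $\sup_W u = 1$, so $u$ is an inner potential and $W$ is $D_p$-massive.

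The step I expect to be the main obstacle is the non-degeneracy argument of the last paragraph, and specifically the requirement that $v = 0$ on the \emph{entire} outer boundary $\partial W$. For boundary vertices lying in $U$ this is immediate from the fact that $W$ is a component of $\{x \in U \mid h(x)>b\}$, but vertices of $\partial W \setminus U$ carry no a priori bound on $h$, and at such a vertex one could have $v > 0$, which would break both the maximum-principle proof of infiniteness and the comparison $v/c \le u_n$ on $\partial W$. Handling these exterior boundary vertices carefully — either by restricting the comparison to the portion of $\partial W$ on which $v$ vanishes, or by passing to the component of $\{x \in V \mid h(x) > b\}$ containing $W$ and transferring massiveness back to $W$ — is the delicate technical point; the accompanying energy estimate ensuring $u \in D_p(\Gamma)$ and the lower semicontinuity used to pass to the limit are the other places requiring care.
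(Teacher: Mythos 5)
Your strategy --- truncate $h$ to get the $p$-subharmonic function $v=(h-b)^{+}$, solve Dirichlet problems on an exhaustion of the component $W$ with zero data on $\partial W$, and use $v/c$ as a lower barrier to force $\sup_W u=1$ --- is precisely the argument the paper points to, namely the first part of the proof of \cite[Lemma 5.7]{HoloSoar} (the paper gives no proof of its own). But that argument is written for $U=V$, where every vertex of $\partial W$ automatically satisfies $h\le b$ and hence $v=0$ on $\partial W$. The obstacle you flag in your final paragraph is therefore not a peripheral technicality: it is exactly the content of the generalization from $U=V$ to arbitrary infinite connected $U$, and you do not close it. Concretely, a vertex $y\in\partial W\setminus U$ may have $h(y)>b$, so $v(y)>0$; then (i) your maximum-principle argument that $W$ is infinite collapses (and infiniteness is the first requirement in the definition of a $D_p$-massive set), and (ii) the comparison $v/c\le u_n$ fails on $\partial W_n$, so the normalization $\sup_W u=1$ is lost. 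Neither of your two proposed repairs is carried out, and the second cannot work as stated: $D_p$-massiveness does not pass to subsets, so knowing that the component of $\{x\in V\mid h(x)>b\}$ containing $W$ is massive says nothing about $W$ itself. Replacing $v$ by the function equal to $h-b$ on $W$ and $0$ elsewhere restores the boundary condition $v=0$ on $\partial W$ but destroys $p$-subharmonicity at vertices of $W$ adjacent to points of $\partial W\setminus U$ where $h>b$, so that naive fix fails as well. As written, your proof establishes the lemma only under the extra hypothesis that $h\le b$ on all of $\partial W$ (e.g.\ when $U=V$).

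A secondary, more easily repaired flaw: in the exhaustion you impose $u_n=1$ on the far boundary $\overline{W}\cap\partial B_n(o)$. With that data there is no evident competitor giving a uniform bound on $I_p(u_n,W_n)$, so the assertion that the limit $u$ lies in $D_p(\Gamma)$ is unsupported. The standard device is to impose $u_n=v/c$ on the far boundary instead; then minimality gives $I_p(u_n,W_n)\le c^{-p}I_p(h,V)$ uniformly, the comparison principle still yields $0\le v/c\le u_n\le u_{n+1}\le 1$, and lower semicontinuity of the Dirichlet sum puts the limit in $BD_p(\Gamma)$ with $\sup_W u=1$. That change is routine; the boundary issue in the previous paragraph is not.
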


\subsection{Proof of Theorem \ref{mainresult}}

We are now ready to prove Theorem \ref{mainresult}. Let $D_1, D_2, \dots, D_n$ be a collection of pairwise disjoint $D_p$-massive subsets of $V$. For each $k$, with $1 \leq k \leq n$, let $u_k$ be an inner potential for $D_k$. We may and do assume $u_k = 0$ on $V \setminus D_k$. Also, $\overline{D_k} \cap \partial_p(\Gamma) \neq \emptyset$ by Proposition \ref{Dpmassiveclosure}. For each $k$ we will produce an element $\chi_k \in \overline{D_k} \cap \partial_p(\Gamma)$ for which $\chi_k(u_k) \neq 0$ and $\chi_k(u_j) = 0$ if $j \neq k$. This will establish $\vert \partial_p (\Gamma) \vert \geq n$. Extend $u_k$ to a continuous function on $Sp(BD_p(\Gamma))$. By \cite[Theorem 2.6]{Puls10} there exists a $p$-harmonic function $h_k$ on $V$ such that $h_k = u_k$ on $\partial_p(\Gamma)$. The maximum principle (\cite[Theorem 4.7]{Puls10}) says that $0 < h_k < 1$ on $V$. Let $B_k = \{ x \in D_k \mid h_k(x) > 1 - \epsilon\}$, where $0 < \epsilon < \frac{1}{4}$. Since $\sup u_k = 1$ on $D_k, B_k \neq \emptyset$. Let $C_k$ be a component of $B_k$. By Lemma \ref{componentdpmassive} $C_k$ is $D_p$-massive. Thus $\overline{C_k} \cap \partial_p(\Gamma) \neq \emptyset$. Select $\chi_k \in \overline{C_k} \cap \partial_p(\Gamma)$. Because $h_j = u_j$ on $\partial_p(\Gamma), \chi_k(h_j) = \chi_k(u_j)$. Consequently, $\chi_k(u_k) =1$ and $\chi_k(u_j) =0$ if $k \neq j$. Hence, $\vert \partial_p(\Gamma) \vert \geq n$ if there exists $n$ pairwise disjoint $D_p$-massive subsets of $V$.

Conversely, let $\chi_1, \chi_2, \dots, \chi_n$ be distinct elements from $\partial_p(\Gamma)$. By Urysohn's lemma there exists a continuous function $f_1 \colon Sp(BD_p(\Gamma )) \rightarrow [0,1]$ with $f_1(\chi_1) = 1$ and $f_1(\chi_k) = 0$ if $k \neq 1$. Let $M_1 = f_1^{-1}(1)$. For each integer $k$ with $2 \leq k \leq n$ we can inductively define a continuous function $f_k \colon Sp(BD_p(\Gamma)) \rightarrow [0,1]$ with the following properties: \\
\[ f_k(x) = \left\{ \begin{array}{cl} 
                          1   &   x = \chi_k \\
                         0    &   x= \chi_i, i \neq k \\
                         0    &   x \in \cup_{i=1}^{k-1} M_i  \end{array} \right. \] 
where $M_k = f_k^{-1} (1)$. 

By the density of $BD_p(\Gamma)$ in $C(Sp (BD_p (\Gamma)))$, we can assume $f_k \in BD_p(\Gamma)$ for each $k$. Using Theorems 4.6 and 4.8 of \cite{Puls10}, we obtain a unique $h_k \in BHD_p (\Gamma)$ with $h_k = f_k$ on $\partial_p(\Gamma)$ for each $k$. Also, $0 < h_k < 1$ on $V$. Observe that if $h_k(\chi) = 1 = h_j(\chi)$ for some $\chi \in \partial_p(\Gamma)$, then $k=j$. Let $\epsilon >0$ and consider the set $A_{k,\epsilon} = \{ x \in V \mid h_k(x) > 1 - \epsilon \}$. For each $k$ let $D_{k, \epsilon}$ be a component of $A_{k, \epsilon}$. Furthermore, choose the $D_{k, \epsilon}$ so that $D_{k,\epsilon_1} \subseteq D_{k, \epsilon_2}$ if $0 < \epsilon_1 < \epsilon_2$. Lemma \ref{componentdpmassive} yields that $D_{k, \epsilon}$ is $D_p$-massive. The proof will be complete if there exists an $\epsilon > 0$ such that $D_{k, \epsilon} \cap D_{j, \epsilon} = \emptyset$ if $k \neq j$. Assume for the purposes of contradiction that this condition is not true. Then there exists $j, k$ with $D_{k, \epsilon} \cap D_{j, \epsilon} \neq \emptyset$ for all $\epsilon > 0$. Let $ i \in \mathbb{N}$. Denote by $C_i$ a component of $D_{k, 2^{-i}} \cap D_{j, 2^{-i}}$. By the comparison principle \cite[Theorem 3.14]{HoloSoar} $C_i$ is infinite. Using Lemma \ref{componentdpmassive} we can produce a $D_p$-massive subset of $C_i$. An appeal to Proposition \ref{Dpmassiveclosure} produces a $\psi_i \in \overline{C_i} \cap \partial_p(\Gamma)$. Clearly $\psi_i(h_j) > 1 - 2^{-i}$ and $\psi_i(h_k) > 1 - 2^{-i}$. The sequence $(\psi_i)$ in $\partial_p(\Gamma)$ has a convergent subsequence that converges to some $\psi$ in $\partial_p(\Gamma)$. Consequently, $\psi(h_k) = 1= \psi(h_j)$. This contradicts our earlier observation that if $h_k(\chi) = 1 = h_j(\chi)$ for some $\chi \in \partial_p(\Gamma)$, then $k = j$. Therefore, there exists an $\epsilon >0$ for which $D_{k, \epsilon} \cap D_{j, \epsilon} = \emptyset$ for each $j, k$ with $1 \leq j, k \leq n$. The proof of the theorem is now complete.

\section{A result of Kim and Lee}\label{elaboration} 

In this section we elaborate on how Theorem \ref{mainresult} improves the main result of \cite{KimLee07}. We start by giving some needed definitions.

Recall that $E$ represents the edge set of a graph $\Gamma$. Denote by $\mathcal{F}(E)$ the set of all real-valued functions on $E$ and let $\mathcal{F}^+(E)$ be the subset of $\mathcal{F}(E)$ that consists of all nonnegative functions. For $f \in \mathcal{F}(E)$ set 
\[ \xi_p(f) = \sum_{e \in E} \vert f(e) \vert^p. \]
The edge set of a path $\gamma$ in $\Gamma$ will be denoted by $Ed(\gamma).$ Let $Q$ be a set of paths with no self-intersections in $\Gamma$. Indicate by $\mathcal{A}(Q)$ the set of all $f \in \mathcal{F}^+(E)$ that satisfy $\xi_p(f) < \infty$ and $\sum_{e \in Ed(\gamma)} f(e) \geq 1$ for all $\gamma \in Q$. The {\em extremal length} of order $p$ for $Q$ is defined by 
\[ \lambda_p(Q)^{-1} = \inf\{ \xi_p(f) \mid f \in \mathcal{A}(Q) \}. \]
The number $\lambda_p(Q)^{-1}$ is commonly known as the {\em $p$-modulus} of the path family $Q$. We shall say that a property holds for {\em $p$-almost every path} in a collection of paths if the set of paths for which the property does not hold has infinite extremal length (or $p$-modulus zero).

Let $A \subseteq V$, write $\Gamma_A$ for the largest subgraph of $\Gamma$ that has vertex set $A$. Let $\gamma$ be a one-sided infinite path in $\Gamma$. For a real-valued function $f$ on $V$, set $f(\gamma) = \lim_{n \rightarrow \infty} f(x_n)$ as $n \rightarrow \infty$ along the vertices of $\gamma$. Let $P_A$ be the set of all one-sided infinite paths with no self-intersections contained in $\Gamma_A$. We define a real-valued function $f$ to be {\em asymptotically constant} on $A$ if there exists a constant $c$ such that 
\[  f(\gamma) = c \mbox{ for } p\mbox{-almost every path } \gamma \in P_A. \]
We shall say that an infinite connected set $U$ has {\em property AC} if each function in $BHD_p(\Gamma)$ is asymptotically constant on $U$.

An infinite connected subset $S$ of $V$ is said to be {\em $p$-hyperbolic} if there exists a nonempty finite subset $A$ of $V$ for which 
\[   Cap_p(A, \infty, S) = \inf_u I_p(u,S) > 0, \]
where the infimum is taken over all finitely supported functions $u$ on $S \cup \partial S$ such that $u =1$ on $A$. If $S$ is not $p$-hyperbolic, then it is said to be {\em $p$-parabolic}. The quantity $Cap_p(A, \infty, S)$ is known as the {\em $p$-capacity} of $S$.

Motivated by \cite[Theorem 3.1]{Yamasaki86}, Kim and Lee prove the following result in \cite[Theorem 1.1]{KimLee07}
\begin{Thm} \label{KimLeeresult1} 
Let $n \in \mathbb{N}$ and let $\Gamma$ be a graph with $n$ $p$-hyperbolic ends. Suppose each $p$-hyperbolic end has property $AC$. Then given any real numbers $a_1, a_2, \dots, a_n \in \mathbb{R}$, there exists an unique $h \in BHD_p(\Gamma)$ such that
\[ h(\gamma) = a_i \mbox{ for $p$-almost every path } \gamma \in P_{F_i} \]
for each $i = 1, 2, \dots , n$, where $F_1, F_2, \dots, F_n$ are the $p$-hyperbolic ends of $\Gamma$.
\end{Thm}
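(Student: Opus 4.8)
The plan is to reduce Theorem~\ref{KimLeeresult1} to Theorem~\ref{mainresult} by reconciling the two different descriptions of ``boundary at infinity'': Kim and Lee's $p$-hyperbolic ends with property $AC$, and the $p$-harmonic boundary $\partial_p(\Gamma)$ of this paper. The key observation is that a $p$-hyperbolic end should give rise to a $D_p$-massive subset, while property $AC$ should force the associated piece of the $p$-harmonic boundary to collapse to a single point. First I would show that each $p$-hyperbolic end $F_i$ contains (or is) a $D_p$-massive subset: $p$-hyperbolicity supplies positive $p$-capacity, which by the usual capacitary/variational argument yields a bounded $p$-harmonic inner potential, so $F_i$ is $D_p$-massive. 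Since the ends $F_1,\dots,F_n$ are pairwise disjoint, Theorem~\ref{mainresult} immediately gives $\vert \partial_p(\Gamma)\vert \geq n$.

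The second step is to control the boundary from above using property $AC$. I would argue that property $AC$ of an end $F_i$ implies $\overline{i(F_i)} \cap \partial_p(\Gamma)$ is a single point. The mechanism: if a function $f \in BHD_p(\Gamma)$ is asymptotically constant on $F_i$ with limiting value $c$ along $p$-almost every path, then because $p$-almost every path reaches every boundary point of $F_i$ in $\partial_p(\Gamma)$ (extremal length links path families to harmonic boundary points via Proposition~\ref{Dpmassiveclosure} and the capacitary description), the Gelfand transform $\hat f$ must take the constant value $c$ on all of $\overline{i(F_i)} \cap \partial_p(\Gamma)$. Since $BHD_p(\Gamma)$ separates points of $\partial_p(\Gamma)$ (distinct characters are distinguished by some $f$, which can be taken $p$-harmonic by Theorems 4.6 and 4.8 of \cite{Puls10}), two distinct boundary points in $\overline{i(F_i)}$ would be separated by some $h \in BHD_p(\Gamma)$, contradicting that $h$ is asymptotically constant on $F_i$. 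Hence each end contributes exactly one harmonic boundary point, and since the ends exhaust the relevant part of $\Gamma$, we conclude $\vert \partial_p(\Gamma) \vert = n$.

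With $\vert \partial_p(\Gamma) \vert = n$ established, the existence and uniqueness of $h \in BHD_p(\Gamma)$ with prescribed asymptotic values $a_1,\dots,a_n$ follows from the solvability of the Dirichlet problem on $\partial_p(\Gamma)$. Concretely, label the $n$ harmonic boundary points $\chi_1,\dots,\chi_n$ with $\chi_i \in \overline{i(F_i)}$, extend the assignment $\chi_i \mapsto a_i$ to a continuous function on $Sp(BD_p(\Gamma))$, approximate it by an element of $BD_p(\Gamma)$ using density, and invoke Theorems 4.6 and 4.8 of \cite{Puls10} to produce a unique $h \in BHD_p(\Gamma)$ agreeing with these boundary values on $\partial_p(\Gamma)$. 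Property $AC$ then translates $h(\chi_i) = a_i$ back into $h(\gamma) = a_i$ for $p$-almost every $\gamma \in P_{F_i}$, and uniqueness transfers from the uniqueness clause of the Dirichlet solution together with the fact that the boundary values determine $h$.

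The main obstacle I anticipate is the precise dictionary between the measure-theoretic ``$p$-almost every path'' language of extremal length and the point-set topology of $\partial_p(\Gamma)$: one must verify that asymptotic constancy along $p$-almost every path genuinely forces constancy of $\hat f$ on the entire portion $\overline{i(F_i)} \cap \partial_p(\Gamma)$ of the harmonic boundary, i.e.\ that the exceptional family of paths (of infinite extremal length) cannot ``see'' any extra boundary points. Establishing that every point of $\overline{i(F_i)} \cap \partial_p(\Gamma)$ is approached by $p$-almost every path in $P_{F_i}$ is where the work lies, and this is what upgrades the inequality $\vert \partial_p(\Gamma)\vert \geq n$ to the equality that makes the Dirichlet data $a_1,\dots,a_n$ exactly matchable.
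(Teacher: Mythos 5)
First, a point of orientation: the paper does not prove Theorem~\ref{KimLeeresult1} at all --- it is quoted from \cite{KimLee07}, and Section~\ref{elaboration} only explains how Theorem~\ref{mainresult} subsumes it. So there is no in-paper proof to match; what the paper does supply (Lemma~\ref{onepointprep} and Proposition~\ref{onepoint}) is precisely the dictionary your plan needs, and your high-level strategy --- one harmonic boundary point per $AC$ end, then solve the Dirichlet problem on $\partial_p(\Gamma)$ via Theorems 4.6 and 4.8 of \cite{Puls10} --- is sound in outline.

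The genuine gap is the step you yourself flag as the ``main obstacle,'' and the mechanism you propose for it is not the right one. A single path $\gamma$ does not converge to a single point of $Sp(BD_p(\Gamma))$; its set of extreme points $Ex(\gamma)$ can be large, so ``$p$-almost every path reaches every boundary point of $F_i$'' is neither well-posed nor provable as stated. The correct bridge, which is the content of Lemma~\ref{onepointprep}, runs through superlevel sets: if $\chi \in \overline{F_i}\cap\partial_p(\Gamma)$ had $\chi(h) \neq c_h$, a component $C$ of $\{x \in F_i \mid h(x) > \chi(h)-\epsilon\}$ would be $D_p$-massive by Lemma~\ref{componentdpmassive}, hence $\lambda_p(P_C)<\infty$ by Proposition 5.3 of \cite{PulsSciFenn}, while asymptotic constancy forces $\lambda_p(P_C)=\infty$ --- a contradiction. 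Without this (or an equivalent) argument your second step is an assertion, not a proof. Two further gaps: (i) you claim every $p$-hyperbolic end is $D_p$-massive; this is stronger than what the paper asserts (it only records the converse) and is also unnecessary, since $\lambda_p(P_{F_i})<\infty$ together with Lemma 5.2 of \cite{PulsSciFenn} already gives $\overline{F_i}\cap\partial_p(\Gamma)\neq\emptyset$; (ii) the equality $\vert\partial_p(\Gamma)\vert=n$, which you need for the uniqueness clause, requires showing that the $\chi_i$ are pairwise distinct (e.g.\ by separating disjoint ends with a $BD_p$ function that is locally constant off a finite set) and that neither the finite core nor any $p$-parabolic end contributes a harmonic boundary point; ``the ends exhaust the relevant part of $\Gamma$'' does not discharge this.
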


We see immediately that if a graph $\Gamma$ satisfies the hypothesis of this theorem, then $BHD_p(\Gamma)$ can be identified with $\mathbb{R}^n$, which is the same conclusion as Corollary \ref{cormainresult}. However, the hypothesises of Theorem \ref{KimLeeresult1} are quite strong. The number of ends of a graph $\Gamma$ is independent of $p$, and the $AC$ property is also very restrictive. For example, let $G$ denote a co-compact lattice in  the real rank 1 simple Lie groups $Sp(n,1), n \geq 2$. The Cayley graph of the group $G$ has one end, but there are nonconstant $p$-harmonic functions with finite $p$-Dirichlet sum on $G$ exactly when $ p > 4n +2$. See \cite[Section 4]{Puls06} for the details.

When the cardinality of $\partial_p(\Gamma)$ is finite, Theorem \ref{mainresult} completely characterizes the number of elements in $\partial_p(\Gamma)$ in terms of pairwise disjoint $D_p$-massive sets. It is the case that $D_p$-massive sets are also $p$-hyperbolic. The reason we are able to drop the property $AC$ assumption from Theorem \ref{KimLeeresult1}  in our Theorem \ref{mainresult} is given in Proposition \ref{onepoint} below. Before we prove the proposition we need the following
\begin{Lem}\label{onepointprep}
Let $\Gamma$ be a graph with bounded degree and let $1 < p \in \mathbb{R}$. Suppose $F$ is an infinite connected subset of $V$ with property $AC$. For $h \in BHD_p(\Gamma)$, denote by $c_h$ the constant for which $h(\gamma) = c_h$ for $p$-almost every path in $P_F$. If $\chi \in \overline{F} \cap \partial_p(\Gamma)$, then $\chi(h) = c_h$.
\end{Lem}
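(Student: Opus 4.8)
The plan is to argue by contradiction. Suppose $\chi(h)=a\neq c_h$. Since $-h\in BHD_p(\Gamma)$ has asymptotic constant $-c_h$ on $F$ and $\chi(-h)=-a$, replacing $h$ by $-h$ if necessary I may assume $a>c_h$. Because $\chi\in\overline{F}$, there is a net $(x_\alpha)$ in $F$ with $i(x_\alpha)\to\chi$, hence $h(x_\alpha)\to a$; and since $\chi\in R_p(\Gamma)$ this net eventually leaves every finite subset of $V$. Thus $a\le\sup_{x\in F}h$, while the asymptotic value $c_h$ gives $\inf_{x\in F}h\le c_h\le\sup_{x\in F}h$. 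Fixing any $b$ with $c_h<b<a$, we obtain $\inf_F h<b<\sup_F h$, so by Lemma \ref{componentdpmassive} every component of $S:=\{x\in F\mid h(x)>b\}$ is $D_p$-massive.

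Next I would locate $\chi$ on this super-level set. As $\widehat h$ is continuous and $\widehat h(\chi)=a>b$, the set $\{\psi\mid\widehat h(\psi)>b\}$ is an open neighborhood of $\chi$; intersecting it with $i(F)$ and using $\chi\in\overline{i(F)}$ yields $\chi\in\overline{i(S)}$. In particular $S\neq\emptyset$, and $h>b>c_h$ on all of $S$, so that $h(\gamma)\neq c_h$ along \emph{every} one-sided infinite path of $P_F$ that remains in $S$.

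The heart of the argument is to convert the \emph{boundary} statement $\chi\in\overline{i(S)}\cap\partial_p(\Gamma)$ into a \emph{path} statement: namely, that the family $Q\subseteq P_F$ of one-sided infinite paths that remain in $S$ has positive $p$-modulus, equivalently finite extremal length $\lambda_p(Q)$. Granting this, each $\gamma\in Q$ has $h(\gamma)\neq c_h$, so $Q$ is a positive-modulus family on which $h$ fails to take its asymptotic value; this contradicts property $AC$, which asserts that the set of $\gamma\in P_F$ with $h(\gamma)\neq c_h$ has infinite extremal length. Hence $a=c_h$, and the symmetric case gives $\chi(h)=c_h$. To produce the family $Q$ I would appeal to the extremal-length/$p$-capacity correspondence underlying the definitions of this section, together with Proposition \ref{Dpmassiveclosure} and the results of \cite{Yamasaki86,KimLee07}, using that $\chi\in\partial_p(\Gamma)$ forces the part of $S$ near $\chi$ to be non-negligible in $p$-capacity.

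I expect this last conversion to be the main obstacle, and it must be carried out with care. The naive hope, that a $D_p$-massive set automatically carries a positive-modulus family of paths staying inside it, is \emph{false}: the super-level set $S=\{h>b\}\cap F$ is typically $D_p$-massive yet is left by $p$-almost every path of $P_F$ (this is exactly what property $AC$ asserts when $b>c_h$), so its interior escaping paths can have $p$-modulus zero. Consequently the argument genuinely needs the location of $\chi$ on the harmonic boundary and cannot rest merely on the inequality $\sup_F h>c_h$; extracting from $\chi\in\overline{i(S)}\cap\partial_p(\Gamma)$ a positive-modulus family of paths that remain in $S$ (and, if needed, pinning $\chi$ to the closure of a single massive component of $S$ by shrinking the neighborhood on which $\widehat h>b$) is the delicate point, whereas the sign reduction and the application of Lemma \ref{componentdpmassive} are routine.
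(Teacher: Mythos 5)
Your skeleton matches the paper's: assume $\chi(h) > c_h$, form the super-level set $S = \{x \in F \mid h(x) > b\}$ for some $b$ with $c_h < b < \chi(h)$, and apply Lemma \ref{componentdpmassive} to conclude that its components are $D_p$-massive; the contradiction must then come from exhibiting a positive-$p$-modulus family of paths staying inside $S$, since $h$ cannot tend to $c_h$ along any such path. But you leave exactly that last step unproved, and worse, you explicitly reject the fact that closes it. The paper finishes in one line by citing Proposition 5.3 of \cite{PulsSciFenn}: if $C$ is $D_p$-massive, then $\lambda_p(P_C) < \infty$, i.e.\ the family of one-sided infinite paths contained in $C$ has positive $p$-modulus. (This is the quantitative content of the remark in Section \ref{elaboration} that $D_p$-massive sets are $p$-hyperbolic, combined with the capacity--modulus correspondence.) Applied to a component $C$ of $S$, it gives $\lambda_p(P_C) < \infty$, while every $\gamma \in P_C \subseteq P_F$ has $h(\gamma) \geq b > c_h$ whenever the limit exists; this contradicts property $AC$. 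Note that the precise location of $\chi$ plays no role beyond guaranteeing that $S$ is nonempty and that $\sup_F h > b$, so your effort to pin $\chi$ inside $\overline{i(S)}$ is unnecessary.

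Your stated reason for believing the ``naive hope'' is false is circular: you argue that a super-level set above $c_h$ is ``typically $D_p$-massive yet left by $p$-almost every path of $P_F$,'' but that configuration is precisely what the lemma (via Proposition 5.3 of \cite{PulsSciFenn}) shows cannot occur; you cannot use the presumed consistency of the situation being refuted as evidence against the tool that refutes it. Having discarded the correct tool, you substitute an unspecified appeal to an ``extremal-length/$p$-capacity correspondence'' that would extract a positive-modulus path family from $\chi \in \overline{i(S)} \cap \partial_p(\Gamma)$. No such argument is given, and it is exactly where the proof lives. As written, the proposal is a correct reduction followed by an unfilled, and misdiagnosed, gap.
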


\begin{proof} 
Let $h \in BHD_p(\Gamma)$. Suppose $c_h < \chi(h)$. Let $\epsilon > 0$ such that $c_h < \chi(h) - \epsilon$. Define $A = \{ x \in F \mid h(x) > \chi(h) - \epsilon \}$ and let $C$ be a component of $A$. Observe that $\lambda_p (P_C) = \infty$ due to $h(\gamma) > c_h$ for each $\gamma \in P_C$. By Lemma \ref{componentdpmassive}, $C$ is $D_p$-massive. Proposition 5.3 of \cite{PulsSciFenn} now yields the contradiction $\lambda_p(P_C) < \infty$. A similar argument shows that it is also not the case $\chi(h) < c_h$. Therefore $\chi(h) = c_h$.
\end{proof}

Denote by $V(\gamma)$ the vertex set of an infinite path $\gamma$ in $\Gamma$. Write $\overline{V}(\gamma)$ for the closure of $i(V(\gamma))$ in $Sp(BD_p(\Gamma))$. The set of extreme points of $\gamma$ is given by
 \[ Ex(\gamma) = \overline{V}(\gamma) \setminus i(V(\gamma)). \]
\begin{Prop}\label{onepoint}
Let $1 < p \in \mathbb{R}$ and let $\Gamma$ be a graph of bounded degree. Let $F$ be a $p$-hyperbolic subset of $V$. Then $F$ has property $AC$ if and only if $\vert \overline{F} \cap \partial_p(\Gamma) \vert = 1$.
\end{Prop}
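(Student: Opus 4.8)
The plan is to prove the two implications separately, splitting the forward direction into the two bounds $\vert \overline{F}\cap\partial_p(\Gamma)\vert \le 1$ and $\vert \overline{F}\cap\partial_p(\Gamma)\vert \ge 1$. Throughout I would use that the Gelfand transform $\hat h$ of any $h\in BD_p(\Gamma)$ is continuous on the compact space $Sp(BD_p(\Gamma))$, so that $\inf_F h \le \hat h(\chi)\le \sup_F h$ for every $\chi\in\overline{F}$, and more generally $\hat h(\chi)\ge \alpha$ whenever $h\ge\alpha$ on a set whose closure contains $\chi$.

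For the implication $\vert \overline{F}\cap\partial_p(\Gamma)\vert =1 \Rightarrow$ property $AC$, write $\overline{F}\cap\partial_p(\Gamma)=\{\chi_0\}$, fix $h\in BHD_p(\Gamma)$, and set $c=\chi_0(h)$. I would in fact prove the stronger statement that $h\equiv c$ on $F$, which makes $h$ trivially asymptotically constant. Suppose $\sup_F h>c$. Choosing $\epsilon>0$ with $c+\epsilon<\sup_F h$ and noting $\inf_F h\le c$ (since $\chi_0\in\overline F$), Lemma \ref{componentdpmassive} applied with $U=F$ shows that each component $C$ of $\{x\in F\mid h(x)>c+\epsilon\}$ is $D_p$-massive. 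By Proposition \ref{Dpmassiveclosure} there is $\chi_1\in\overline{C}\cap\partial_p(\Gamma)\subseteq\overline{F}\cap\partial_p(\Gamma)$, and since $h>c+\epsilon$ on $C$, continuity forces $\chi_1(h)\ge c+\epsilon>c=\chi_0(h)$, so $\chi_1\neq\chi_0$, contradicting uniqueness. Hence $\sup_F h\le c$; the symmetric argument applied to the sublevel sets $\{x\in F\mid h(x)<c-\epsilon\}$ gives $\inf_F h\ge c$, whence $h\equiv c$ on $F$.

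For the bound $\vert \overline{F}\cap\partial_p(\Gamma)\vert \le 1$ under property $AC$, suppose $\chi_1,\chi_2\in\overline{F}\cap\partial_p(\Gamma)$ are distinct. By Lemma \ref{onepointprep}, every $h\in BHD_p(\Gamma)$ satisfies $\chi_1(h)=c_h=\chi_2(h)$. To reach a contradiction I would separate $\chi_1$ and $\chi_2$ by a harmonic function: Urysohn's lemma on the compact Hausdorff space $Sp(BD_p(\Gamma))$, together with the density of the Gelfand image of $BD_p(\Gamma)$ in $C(Sp(BD_p(\Gamma)))$, yields $g\in BD_p(\Gamma)$ with $g(\chi_1)\neq g(\chi_2)$; solving the boundary value problem via Theorems 4.6 and 4.8 of \cite{Puls10} then produces $h\in BHD_p(\Gamma)$ with $h=g$ on $\partial_p(\Gamma)$, so that $\chi_1(h)=g(\chi_1)\neq g(\chi_2)=\chi_2(h)$, a contradiction. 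Thus at most one point occurs.

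The remaining, and I expect hardest, step is to show $\overline{F}\cap\partial_p(\Gamma)\neq\emptyset$ for $p$-hyperbolic $F$; note that this uses only $p$-hyperbolicity, not property $AC$. I would produce a $D_p$-massive subset of $F$ and then invoke Proposition \ref{Dpmassiveclosure}. Concretely, let $w$ be the equilibrium potential associated with $Cap_p(A,\infty,F)>0$, so that $0\le w\le 1$, $w=1$ on the finite set $A$, $w$ is $p$-harmonic on $F\setminus A$, and, by transience, $\inf_F w=0$. Fix $t$ with $0<t<1$ small enough that $A\cap\{w<t\}=\emptyset$, let $C$ be a component of $\{x\in F\mid w(x)<t\}$ on which $w$ has infimum $0$, and set $v=(t-w)/t$ on $C\cup\partial C$, extended by $0$ elsewhere. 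Since the $p$-Laplacian is homogeneous and unchanged by adding constants, $v$ is $p$-harmonic on $C$; moreover $v=0$ on $\partial C$, $0\le v\le 1$, $\sup_C v=1$, and $v\in BD_p(\Gamma)$ because $w\in D_p(\Gamma)$. Thus $C$ is a $D_p$-massive subset of $F$, giving $\overline{F}\cap\partial_p(\Gamma)\supseteq\overline{C}\cap\partial_p(\Gamma)\neq\emptyset$. The delicate points here, which I would handle with the maximum and comparison principles of \cite{HoloSoar}, are the existence and stated properties of $w$ (in particular $\inf_F w=0$) and the verification that some component $C$ is infinite with $\inf_C w=0$; should one only have $\inf_F w=m>0$, the same argument goes through after rescaling $v$ by the factor $(t-m)^{-1}$.
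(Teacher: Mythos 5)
Your forward direction (property $AC$ implies at most one boundary point) is the paper's argument verbatim, and your nonemptiness step differs only in that you reconstruct a $D_p$-massive subset of $F$ from an equilibrium potential where the paper simply cites $\lambda_p(P_F)<\infty$ together with Lemma 5.2 of \cite{PulsSciFenn}; your sketch there has the loose ends you acknowledge (e.g.\ $v=(t-w)/t$ need not vanish on $\partial C$, since $w$ can exceed $t$ there), but the strategy is reasonable. The genuine problem is the converse direction. The ``stronger statement'' you propose to prove --- that $h\equiv c$ on $F$ whenever $\overline{F}\cap\partial_p(\Gamma)=\{\chi_0\}$ --- is false. Take $\Gamma$ to be two copies of a $p$-hyperbolic graph with one-point $p$-harmonic boundary (e.g.\ $\mathbb{Z}^d$ with $p<d$) glued at a vertex, $F$ one of the two ends, and $h\in BHD_p(\Gamma)$ the nonconstant function guaranteed by Theorem \ref{mainresult} with $h\to 0$ at infinity in $F$. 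Then $\overline{F}\cap\partial_p(\Gamma)$ is a single point $\chi_0$ with $\chi_0(h)=0=c$, yet $h>0$ everywhere on $F$ (it cannot vanish identically on $F$: at the vertex of $F$ adjacent to the junction, $\Delta_p h$ would then be strictly positive). So $\sup_F h>c$ while only one boundary point lies over $F$.

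The step that fails is your application of Lemma \ref{componentdpmassive} followed by Proposition \ref{Dpmassiveclosure} to an arbitrary component $C$ of $\{x\in F\mid h(x)>c+\epsilon\}$. In the example above every such component is \emph{finite} (because $h\to c$ at infinity in $F$), and a finite set is not $D_p$-massive by definition, so $\overline{i(C)}=i(C)$ contains no point of $\partial_p(\Gamma)$ and no second character $\chi_1$ is produced. Lemma \ref{componentdpmassive} must be read as producing $D_p$-massive sets only from infinite components: its proof (the construction of an inner potential from a truncation of $h$) needs the component to be infinite, and while the maximum principle forces this when $U=V$, it does not when $U$ is a proper subset, since $\partial C$ may contain vertices of $V\setminus U$ where $h$ exceeds the level. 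This is exactly why the paper does not attempt to show $h$ is constant on $F$, but instead proves only the $p$-almost-everywhere statement: it decomposes $P_F$ as $P_h\cup P_U\cup P_{\infty}$, shows $\lambda_p(P_{\infty})=\infty$ via the telescoping-sum criterion of \cite{KayanoYamasaki84}, shows $\lambda_p(P_{1/n})=\infty$ by extracting a harmonic boundary point from the extreme points of $P_{1/n}$ via \cite[Lemma 5.2]{PulsSciFenn}, and concludes $\lambda_p(P_h)<\infty$ from $\lambda_p(P_F)<\infty$. That extremal-length argument is the real content of this direction and is entirely missing from your proposal.
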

\begin{proof}
Because $F$ is $p$-hyperbolic, it is the case $\lambda_p (P_F) < \infty$. Lemma 5.2 of \cite{PulsSciFenn} implies $\overline{F} \cap \partial_p(\Gamma) \neq \emptyset$. Now suppose $\chi_1$ and $\chi_2$ are distinct elements from $\overline{F} \cap \partial_p(\Gamma)$. Since $BD_p(\Gamma)$ separates points in $Sp(BD_p(\Gamma))$, there exists an $f \in BD_p(\Gamma)$ for which $\chi_1(f) \neq \chi_2(f)$. Combining Theorems 4.6 and 4.8 of \cite{Puls10} we obtain an $h \in BHD_p(\Gamma)$ with the property $f = h$ on $\partial_p(\Gamma)$. Thus $\chi_1(h) \neq \chi_2(h)$, contradicting Lemma \ref{onepointprep}. Hence, $\vert \overline{F} \cap \partial_p(\Gamma) \vert = 1$.

Now assume $\vert \overline{F} \cap \partial_p(\Gamma) \vert = 1$ and let $\chi$ be the unique element in $\overline{F} \cap \partial_p(\Gamma)$. Select an $h \in BHD_p(\Gamma)$ and let $c_h = \chi(h)$. We will now show that $h(\gamma) = c_h$ for $p$-almost every path in $P_F$. Denote by $P_{\infty}$ the set of all $\gamma \in P_F$ for which $h(\gamma)$ does not exist. Let $\gamma = x_0x_1 \dots x_n \ldots \in P_{\infty}$. The identity $h(x_n) = h(x_0) - \sum_{k=1}^n (h(x_{k-1}) - h(x_k))$ implies $\sum_{k=1}^{\infty} \vert h(x_{k-1}) - h(x_k) \vert = \infty$. It now follows \cite[Lemma 2.3]{KayanoYamasaki84} that $\lambda_p(P_{\infty}) = \infty$. For each $n \in \mathbb{N}$, set
\[ P_{1/n} = \{ \gamma \in P_F \setminus P_{\infty} \mid \vert h(\gamma) - c_h \vert > 1/n\}. \]
Now suppose $\lambda_p(P_{1/n}) < \infty$ for some $n \in \mathbb{N}$. By \cite[Lemma 5.2]{PulsSciFenn} 
\[ ( \overline{\cup_{\gamma} \{ Ex(\gamma) \mid \gamma \in P_{1/n} \} }) \cap \partial_p(\Gamma) \neq \emptyset. \]
Let $\psi$ be an element in this intersection. The definition of $P_{1/n}$ implies that $\psi(h) \neq c_h$. Combining the fact $P_{1/n} \subseteq P_F$ with the hypothesis $\vert \overline{F} \cap \partial_p(\Gamma) \vert = 1$ yields $\psi = \chi$, contradicting the fact $\chi(h) = c_h$. Hence $\lambda_p(P_{1/n}) = \infty$ for all $n \in \mathbb{N}$. Let $P_U = \cup_{n=1}^{\infty} P_{1/n}$. Lemma 2.2 of \cite{KayanoYamasaki84} says that $\lambda_p(P_U) = \infty$, and $\lambda_p(P_U \cup P_{\infty}) = \infty$. Let $P_h = \{ \gamma \in P_F \mid h(\gamma) = c_h \}$. Then  $P_F = P_h \cup P_U \cup P_{\infty}.$ Another appeal to \cite[Lemma 2.2]{KayanoYamasaki84} shows that $\lambda_p(P_h) < \infty$ since $\lambda_p(P_F) < \infty$. Thus $h(\gamma) = c_h$ for $p$-almost every path in $P_F$. Therefore, $h$ is asymptotically constant on $F$.
\end{proof}

It follows immediately from this proposition that if a graph $\Gamma$ satisfies the assumptions of Theorem \ref{KimLeeresult1}, then $\vert \partial_p(\Gamma) \vert = n$.

\bibliographystyle{plain}
\bibliography{newpharmbddpmassive}

\begin{thebibliography}{10}

\bibitem{Canton01}
Alicia Cant{\'o}n, Jos{\'e}~L. Fern{\'a}ndez, Domingo Pestana, and Jos{\'e}~M.
  Rodr{\'{\i}}guez.
\newblock On harmonic functions on trees.
\newblock {\em Potential Anal.}, 15(3):199--244, 2001.

\bibitem{HoloSoar}
Ilkka Holopainen and Paolo~M. Soardi.
\newblock {$p$}-harmonic functions on graphs and manifolds.
\newblock {\em Manuscripta Math.}, 94(1):95--110, 1997.

\bibitem{HolopainenSoardi97}
Ilkka Holopainen and Paolo~M. Soardi.
\newblock A strong {L}iouville theorem for {$p$}-harmonic functions on graphs.
\newblock {\em Ann. Acad. Sci. Fenn. Math.}, 22(1):205--226, 1997.

\bibitem{kanaicapp}
Masahiko Kanai.
\newblock Rough isometries, and combinatorial approximations of geometries of
  noncompact {R}iemannian manifolds.
\newblock {\em J. Math. Soc. Japan}, 37(3):391--413, 1985.

\bibitem{Kanai}
Masahiko Kanai.
\newblock Rough isometries and the parabolicity of {R}iemannian manifolds.
\newblock {\em J. Math. Soc. Japan}, 38(2):227--238, 1986.

\bibitem{Kaufman03}
Robert Kaufman, Jos{\'e}~G. Llorente, and Jang-Mei Wu.
\newblock Nonlinear harmonic measures on trees.
\newblock {\em Ann. Acad. Sci. Fenn. Math.}, 28(2):279--302, 2003.

\bibitem{KayanoYamasaki84}
Takashi Kayano and Maretsugu Yamasaki.
\newblock Boundary limit of discrete {D}irichlet potentials.
\newblock {\em Hiroshima Math. J.}, 14(2):401--406, 1984.

\bibitem{KimLee05}
Seok~Woo Kim and Yong~Hah Lee.
\newblock Positive {$p$}-harmonic functions on graphs.
\newblock {\em Bull. Korean Math. Soc.}, 42(2):421--432, 2005.

\bibitem{KimLee07}
Seok~Woo Kim and Yong~Hah Lee.
\newblock Energy finite {$p$}-harmonic functions on graphs and rough
  isometries.
\newblock {\em Commun. Korean Math. Soc.}, 22(2):277--287, 2007.

\bibitem{Puls06}
Michael~J. Puls.
\newblock The first {$L^p$}-cohomology of some finitely generated groups and
  {$p$}-harmonic functions.
\newblock {\em J. Funct. Anal.}, 237(2):391--401, 2006.

\bibitem{Puls10}
Michael~J. Puls.
\newblock Graphs of bounded degree and the {$p$}-harmonic boundary.
\newblock {\em Pacific J. Math.}, 248(2):429--452, 2010.

\bibitem{PulsSciFenn}
Michael~J. Puls.
\newblock Some results concerning the {$p$}-{R}oyden and {$p$}-harmonic
  boundaries of a graph of bounded degree.
\newblock {\em Ann. Acad. Sci. Fenn. Math.}, 37:81--90, 2012.

\bibitem{SarioNakai70}
L.~Sario and M.~Nakai.
\newblock {\em Classification theory of {R}iemann surfaces}.
\newblock Die Grundlehren der mathematischen Wissenschaften, Band 164.
  Springer-Verlag, New York, 1970.

\bibitem{Shanmugalingam03}
Nageswari Shanmugalingam.
\newblock Some convergence results for {$p$}-harmonic functions on metric
  measure spaces.
\newblock {\em Proc. London Math. Soc. (3)}, 87(1):226--246, 2003.

\bibitem{Soardibook}
Paolo~M. Soardi.
\newblock {\em Potential theory on infinite networks}, volume 1590 of {\em
  Lecture Notes in Mathematics}.
\newblock Springer-Verlag, Berlin, 1994.

\bibitem{Wysoczanski}
Janusz Wysocza{\'n}ski.
\newblock Royden compactification of integers.
\newblock {\em Hiroshima Math. J.}, 26(3):515--529, 1996.

\bibitem{Yamasaki1977}
Maretsugu Yamasaki.
\newblock Parabolic and hyperbolic infinite networks.
\newblock {\em Hiroshima Math. J.}, 7(1):135--146, 1977.

\bibitem{Yamasaki86}
Maretsugu Yamasaki.
\newblock Ideal boundary limit of discrete {D}irichlet functions.
\newblock {\em Hiroshima Math. J.}, 16(2):353--360, 1986.

\end{thebibliography}
\end{document}